\documentclass[a4paper,11pt]{article}
\usepackage[top=3cm,bottom=3cm,left=2.5cm,right=2.5cm]{geometry}
\usepackage[T1]{fontenc}
\usepackage[utf8]{inputenc}
\usepackage{lmodern,microtype}
\usepackage[english]{babel}
\usepackage{amsmath,amssymb,amsthm,mathtools}
\usepackage{enumitem,aliascnt,cite,needspace}
\usepackage[hidelinks,hypertexnames=false]{hyperref}
\usepackage[nameinlink,capitalize,noabbrev]{cleveref}
\setlist[enumerate]{leftmargin=2.2em,itemsep=.2em,topsep=.3em}
\setlist[itemize]{leftmargin=2em,itemsep=.2em,topsep=.3em,beginpenalty=10000,midpenalty=10000}
\allowdisplaybreaks
\newtheorem{lemma}{Lemma}[section]
\newaliascnt{theorem}{lemma}
\newtheorem{theorem}[theorem]{Theorem}
\aliascntresetthe{theorem}
\newaliascnt{proposition}{lemma}
\newtheorem{proposition}[proposition]{Proposition}
\aliascntresetthe{proposition}
\newaliascnt{corollary}{lemma}
\newtheorem{corollary}[corollary]{Corollary}
\aliascntresetthe{corollary}
\newcommand{\cF}{\mathcal F}
\newcommand{\cD}{\mathcal D}
\newcommand{\cR}{\mathcal R}
\hypersetup{pdftitle={A cubic chromatic bound for capped graphs},pdfauthor={Hongyang Wang},pdfsubject={Graph coloring and capped ordered graphs}}
\input{glyphtounicode}
\title{A cubic chromatic bound for capped graphs}
\author{Hongyang Wang\thanks{Email address: \href{mailto:hwanghj@connect.ust.hk}{hwanghj@connect.ust.hk}}\\[.4em]
\normalsize Department of Mathematics,\\
\normalsize Hong Kong University of Science and Technology,\\
\normalsize Clear Water Bay, Hong Kong}
\date{}

\begin{document}
\maketitle

\begin{abstract}
An ordered graph is capped if, for any four vertices $a<b<c<d$, the
edges $ac$ and $bd$ imply the edge $ad$. We prove that
every nonempty capped graph $G$ satisfies
$\chi(G)\leq\min\{3^{\omega(G)-1},18\omega(G)^3\}$, where $\chi(G)$
and $\omega(G)$ are its chromatic number and clique number. This proves
a conjecture of Davies, Krawczyk, McCarty and Walczak~\cite{DaviesEtAl2023}.
\end{abstract}

\noindent\textbf{Mathematics Subject Classification (2020)}: 05C15.

\noindent\textbf{Keywords}: Capped graphs; ordered graphs; chromatic number;
polynomial $\chi$-boundedness.

\section{Introduction}\label{sec:introduction}

All graphs are finite, simple and undirected. For a graph $G$, write
$V(G)$ and $E(G)$ for its vertex set and edge set. The subgraph induced
by $X\subseteq V(G)$ is $G[X]$, and $G-v=G[V(G)\setminus\{v\}]$.
A spanning subgraph of $G$ has vertex set $V(G)$.
The neighborhood of a vertex $v$ is $N_G(v)$, its degree is
$|N_G(v)|$, and $N_G(X)=\bigcup_{v\in X}N_G(v)$.
Disjoint vertex sets are
\emph{anticomplete} if there is no edge between them. A \emph{clique}
is a set of pairwise adjacent vertices, and a \emph{stable set} is a
set of pairwise nonadjacent vertices. The maximum size of a clique is
$\omega(G)$. A triangle is a clique of size three, and a graph is
\emph{triangle-free} if it has no triangle. A \emph{forest} has no
cycle, and a \emph{bipartite graph} has a partition into two stable
sets. An \emph{induced path} has no edges between nonconsecutive vertices.

A proper coloring assigns different colors to adjacent vertices, and
$\chi(G)$ is the minimum number of colors needed. A \emph{palette} is
the set of available colors. A color class is the set of vertices
assigned a particular color. A set of colored vertices is
\emph{monochromatic} if all its vertices have the same color.
We use the convention $\chi(G)=\omega(G)=0$ when $G$ has no vertices.

A class of graphs is \emph{$\chi$-bounded} if there is a function $f$
such that $\chi(H)\leq f(\omega(H))$ for every nonempty induced
subgraph $H$ of a graph in the class.
It is \emph{polynomially $\chi$-bounded} if $f$ can be
chosen to be a polynomial. For background on $\chi$-boundedness,
see the survey of Scott and Seymour~\cite{ScottSeymour2020}.

An \emph{ordered graph} is a graph together with a fixed linear order
$v_1<\cdots<v_n$ on its vertices. For vertex sets $X,Y$, the notation $X<Y$ means that
every vertex of $X$ precedes every vertex of $Y$; the same notation is
used when either set is a single vertex. Throughout, left and right refer
to this vertex order. Two edges \emph{cross} if
their endpoints occur in the order $a<b<c<d$ and the edges are $ac,bd$.
An ordered graph is \emph{capped} if every such pair of crossing edges
has the edge $ad$, called its \emph{cap}.
All induced subgraphs carry the restricted order. Taking an induced
subgraph or reversing the order preserves cappedness.

Capped graphs were studied by Davies, Krawczyk, McCarty and
Walczak~\cite{DaviesEtAl2023} in connection with visibility graphs of
polygons. The visibility graph of a simple polygon has the polygon vertices
as its vertices; two are adjacent if the segment joining them lies
in the closed region bounded by the polygon. The vertex order
follows the polygon boundary.

Capped graphs are also called terrain-like graphs; their enumeration was
studied by Froese and Renken~\cite{FroeseRenken2024}.
Compact representations of these graphs were studied by Cardinal and
Yuditsky~\cite{CardinalYuditsky2025} and Cardinal and
Sharir~\cite{CardinalSharir2026}.

\Needspace{7\baselineskip}
Davies, Krawczyk, McCarty and Walczak~\cite{DaviesEtAl2023} proved that every
capped graph with clique number $w\geq2$ has chromatic number at most
$4^w(w-1)$. They conjectured that the class of capped graphs is
polynomially $\chi$-bounded~\cite{DaviesEtAl2023}.
We prove the following bound.

\begin{theorem}\label{thm:main}
Every nonempty capped graph $G$ satisfies
$\chi(G)\leq\min\{3^{\omega(G)-1},18\omega(G)^3\}$.
\end{theorem}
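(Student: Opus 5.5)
We prove the two bounds separately, each by induction on $w=\omega(G)$. The base case $w=1$ is trivial: $G$ is edgeless, so $\chi(G)=1$. The basic structural tool is a \emph{leftmost-neighbour} decomposition. For each vertex $v$ that has a neighbour to its left, let $\ell(v)$ be its leftmost neighbour. We also use levels in a breadth-first search from the leftmost vertex of each connected component. The cap condition gives the following. If $u<v$ and the edges $u\ell(u)$ and $v\ell(v)$ cross, then $\ell(u)<\ell(v)<u<v$, so the cap is $\ell(u)v$. This contradicts the minimality of $\ell(v)$. Hence the edges $x\ell(x)$ form a non-crossing forest $F$. Moreover, for a fixed $a$, the set $S_a=\{v:\ell(v)=a\}$ satisfies $\omega(G[S_a])\le w-1$, since $a$ is complete to $S_a$.

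\textbf{Exponential bound $3^{w-1}$.} We aim for a partition $V(G)=A_1\cup A_2\cup A_3$ in which each $G[A_i]$ is a disjoint union of pieces that are anticomplete to each other. Each piece should be contained in some $S_a$, or more generally in the common neighbourhood of a single vertex outside it, so that it has clique number at most $w-1$. Induction then gives $\chi\le 3\cdot 3^{w-2}$. To build the partition, we group the vertices by the parent $a=\ell(v)$ in $F$ and by the depth of $a$ in $F$ modulo $3$. The key claim is that if $\ell(u)$ and $\ell(v)$ are distinct with depths congruent mod $3$, then $u$ and $v$ are not adjacent. To prove it, suppose $uv$ is an edge with $u<v$. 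Then $\ell(v)\le u$, and the non-crossing property together with the cap rule forces $\ell(v)$ to be an ancestor of $u$ or equal to $\ell(u)$ at depth at most one apart. Along the way we may need to refine $\ell$ to be the leftmost neighbour within the current BFS layer, so that depths behave correctly.

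\textbf{Cubic bound $18w^3$.} The exponential argument loses a factor of $3$ per level, so we need a decomposition that reduces $w$ in a way that only adds. The plan is to find three orderings or subclasses, each governed by a parameter at most $w$, with roughly $\chi\le 18\,w\cdot w\cdot w$. Concretely, we colour each vertex by a triple of invariants. The first is the size of the largest clique ending at $v$ among vertices to its left in a suitable sense, a ``left clique rank'' taking at most $w$ values. The second is an analogous right rank, also at most $w$ values. The third is a third rank along the forest $F$. Within a fixed class of this triple, we aim to show that the induced graph has a very restricted structure, such as a disjoint union of graphs that are non-crossing (outerplanar-like), which are $O(1)$-colourable, with $18=2\cdot 3\cdot 3$ coming from parity and mod-$3$ layer splittings.

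The main obstacle is this last step: showing that the vertices with equal left rank and right rank induce a capped graph with no crossings, or at least one of bounded chromatic number. We would first try the following. Take a crossing $ac,bd$ inside a class. By cappedness $ad$ is an edge. Then extend a maximum left clique at $a$ by $d$, or a maximum right clique at $d$ by $a$, using further caps to show the resulting set is still a clique. This increases a rank, which is the desired contradiction. If this works, each class is an ordered graph without crossings, hence outerplanar and $3$-colourable, and the remaining constant is absorbed by the BFS parity splitting.
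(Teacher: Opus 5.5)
Your proposal has genuine gaps in both bounds.

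\textbf{Exponential bound.} Your observation that the leftmost-neighbour edges never cross is correct. The key claim built on it is false, because for an edge $uv$ the depths of $\ell(u)$ and $\ell(v)$ are not controlled. Take the ordered six-cycle $x_0<y_1<y_2<y_3<u<v$ with edges $x_0y_1,y_1y_2,y_2y_3,y_3u,uv,vx_0$. It has no crossing pair, so it is capped and triangle-free. Here $\ell(u)=y_3$ has depth $3$ and $\ell(v)=x_0$ has depth $0$; $x_0$ is an ancestor of $u$, but four levels up. So $u\in S_{y_3}$ and $v\in S_{x_0}$ land in the same class although $uv$ is an edge. Lengthening the path $y_1\cdots y_k$ defeats every modulus, and the unspecified BFS refinement does not address this.

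For $w=2$ your scheme would by itself be a short layering proof that triangle-free capped graphs are $3$-colourable. That is the hard step of the paper. It needs the three-colour extension lemma (\cref{lem:boundary}), which allows a monochromatic stable left boundary and at most one right boundary vertex. Its induction two-colours an auxiliary graph of component symbols via the bipartiteness lemma, then recurses on each component in reversed order. The paper then gets $3^{w-1}$ not from a vertex partition into pieces of smaller clique number. Instead it splits the \emph{edges} by the rank $p(uv)=\omega(G[\{x\in N_G(v):x\le u\}])$ into $w-1$ spanning subgraphs, each capped and triangle-free by the rank inequalities, and takes the product of their $3$-colourings.

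\textbf{Cubic bound.} Here you give a plan rather than a proof: the third invariant is never defined, and the step you flag as the main obstacle fails as stated. When $w=2$, your left and right ranks only record whether $v$ has a neighbour on that side, so a crossing inside the class $(2,2)$ cannot raise any rank. Such crossings exist. Take the order $a'<a<b'<b<c<c'<d<d'$ with edges $a'a,ac,ad,b'b,bd,cc',dd'$. The only crossing pair is $ac,bd$ and its cap $ad$ is present, the graph is triangle-free, and $a,b,c,d$ all have both ranks equal to $2$.

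The missing idea is the paper's forest selection.
\begin{itemize}
\item For each pair of ranks $j\le i$, \cref{lem:forest} produces one forest per colour class of an auxiliary capped graph $\Gamma$ on the centres. These forests meet every induced path $a<b<c$ with $p(ab)=i$ and $p(bc)=j$.
\item For $i>j$ one proves $\omega(\Gamma)\le4$, so the exponential bound supplies $27$ classes. Grouping by incoming rank gives rows $R_r$ of at most $27r$ forests.
\item For the remaining edges $H$, let $m(v)$ be the largest rank of an $H$-edge entering $v$ from the left. Then $m(u)<p(uv)\le m(v)$ for every $uv\in E(H)$ with $u<v$, so the level sets $U_s$ of $m$ are stable in $H$.
\item Each $G[U_s]$ is covered by the forests of rows $r\ge\max\{1,s\}$, at most $d_s=\sum_{r=\max\{1,s\}}^{w-1}27r$ of them. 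Hence it is $(2d_s-1)$-degenerate, and $\sum_s 2d_s=18(w^3-w)$.
\end{itemize}
So the cubic bound is bootstrapped from the exponential one through $\omega(\Gamma)\le4$, not obtained from a triple of vertex invariants.
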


If $G$ has at least one edge, the proof gives the slightly stronger
polynomial bound $18(\omega(G)^3-\omega(G))$. 
The exponential bound gives three colors when $\omega(G)=2$, and this
is best possible: the five-cycle $v_1v_2v_3v_4v_5v_1$, ordered as
$v_1<v_2<v_3<v_4<v_5$, has no crossing edges and requires three colors.

An \emph{ordered hole} is an induced cycle $c_1c_2\cdots c_kc_1$
with $k\geq4$ and $c_1<\cdots<c_k$.
Davies, Krawczyk, McCarty and Walczak~\cite{DaviesEtAl2023}
noted that three-colorability of triangle-free capped graphs
containing no ordered hole would improve their coloring bounds.
\Cref{cor:triangle-free} proves three-colorability for all
triangle-free capped graphs.

The paper is organized as follows. \Cref{sec:boundary} proves a
three-color extension lemma for triangle-free capped graphs.
In \cref{sec:ranks}, we define edge ranks and obtain the exponential
bound. \Cref{sec:cubic} develops the forest construction and uses it
to prove the cubic bound.

\section{Triangle-free capped graphs}\label{sec:boundary}

An ordered partition $L<X<B$ consists of three disjoint sets
covering the vertex set, with all vertices of $L$ first, then those
of $X$, and then those of $B$. We call $X$ the interior and $L,B$
the left and right boundaries. A prescribed coloring of $L\cup B$
is proper on the induced graph on that set. To extend it means
to keep every prescribed color.

The three-color extension lemma below requires $L$ to be monochromatic
and $B$ to contain at most one vertex. We first prove the following
bipartiteness lemma.

\begin{lemma}\label{lem:bipartite}
Let $G$ be a capped ordered graph with a partition $V(G)=S\cup U$.
Suppose that every vertex of $S$ has only greater neighbors. Suppose
also that, for every edge $xy$ with $x<y$ and $x,y\in U$, the vertex
$x$ is the least neighbor of $y$ in $G$. Then $G$ is bipartite.
\end{lemma}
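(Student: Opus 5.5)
The plan is to take a shortest odd cycle in a hypothetical counterexample and derive a contradiction from the ordering together with the capped property. First I would record the easy structural facts.
\begin{enumerate}
\item $S$ is stable. If $s<s'$ are adjacent vertices of $S$, then $s$ is a smaller neighbor of $s'$, which is forbidden.
\item Each $y\in U$ has at most one smaller neighbor in $U$, since any such neighbor equals the least neighbor of $y$. Hence $G[U]$ is a forest: in any cycle of $G[U]$, the largest vertex would have two smaller $U$-neighbors.
\item The hypotheses pass to induced subgraphs containing, for each $U$-edge $xy$ with $x<y$, the vertex $x$. Restricting to a subgraph can only remove competitors for ``least neighbor'', and $x$ itself is kept.
\end{enumerate}
So any odd cycle must use vertices of $S$, and it must alternate in a controlled way between $S$ and the $U$-forest.

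Next, suppose $G$ is not bipartite and let $C$ be a shortest odd cycle. Then $C$ is induced, and $C$ satisfies the hypotheses by the third fact, so I would work in $G[V(C)]$ alone. Let $v$ be the largest vertex of $C$ and let $a<b$ be its two cycle neighbors. Then $v\in U$. By the second fact at most one of $a,b$ lies in $U$, and if one does, it is the least neighbor of $v$ in $C$, hence it is $a$. Therefore $b\in S$. Likewise, for every vertex of $U$ on $C$ with both cycle neighbors smaller, the larger of the two is in $S$. Every vertex of $S$ on $C$ is a local minimum of $C$ in the order, since both its cycle neighbors are greater. Thus the local minima of $C$ include all of $S\cap V(C)$, and the local maxima lie in $U$.

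The key step is to use cappedness on the induced cycle $C$. For crossing edges $ac,bd$ of $C$ with $a<b<c<d$, the cap $ad$ must itself be an edge of $C$, because $C$ is induced and has length at least $5$. (Length $3$ is impossible, since a triangle would need an $S$-vertex adjacent to two larger adjacent $U$-vertices, and then the larger of these has a smaller $S$-neighbor below its least neighbor.) I would draw $C$ with vertices on a line and edges as arcs, and study a local maximum $v$ with cycle neighbors $a<b$, where $b\in S$. The path continuing from $b$ must go to a vertex $c>b$. Comparing the edge $bc$ with the edge $av$ or with other arcs produces crossings, and hence caps that must be cycle edges. I expect each such cap to force either a chord, contradicting inducedness, or a smaller neighbor of some $U$-vertex lying below its designated least neighbor, contradicting the hypothesis on $U$.

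An alternative I would keep in reserve is induction on $|V(G)|$: delete the largest vertex $v$, 2-colour $G-v$, and show that all neighbours of $v$ fall in one colour class within each component. The case analysis would be the same, phrased via paths instead of cycles.

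The main obstacle is this crossing analysis. An induced cycle in an ordered graph can be laid out in many ways, and I need a canonical choice of which two arcs to compare. I would first try comparing the two arcs at the global maximum $v$ with the arcs at the global minimum of $C$, because every arc of $C$ spans some interval and the outermost arcs are forced to cross the others. Then I would argue that the only crossing patterns consistent with ``caps are cycle edges'' make $C$ an ordered cycle $c_1<\dots<c_k$ with edges $c_ic_{i+1}$ and $c_1c_k$. In that configuration every interior vertex has a smaller neighbor, so only $c_1$ can lie in $S$. Each $c_i$ with $i\geq3$ then has the smaller neighbor $c_{i-1}$, and this forces $c_i\in U$ with least neighbor $c_{i-1}$; for $i=k$ this contradicts the edge $c_1c_k$, because $c_1<c_{k-1}$ is a smaller neighbor of $c_k$.
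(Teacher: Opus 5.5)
Your preliminary facts are correct: $S$ is stable, $G[U]$ is a forest, the hypotheses pass to induced subgraphs, triangles are excluded, the larger lower neighbor of a vertex of $U$ with two smaller cycle neighbors lies in $S$, and an ordered hole $c_1<\dots<c_k$ violates the hypothesis at $c_k$. The gap is the step you flag as the main obstacle, and the concrete route you propose for it is false. Cappedness plus inducedness does \emph{not} force an induced cycle of length at least five to be an ordered hole. Take $1<2<3<4<5$ with the cycle $1\,4\,3\,2\,5\,1$, that is, edges $14,34,23,25,15$. The only crossing pair is $14,25$, and its cap $15$ is a cycle edge; every other pair of disjoint edges is nested. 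So this is a capped induced $C_5$ that is not an ordered hole. It is excluded only by the hypothesis on $U$, and only at vertex $4$. Both $3$ and $4$ have smaller neighbors, so both lie in $U$, and then $3$ would have to be the least neighbor of $4$, although $1<3$ is also a neighbor. Comparing arcs at the global extremes, as you suggest, finds nothing here: $1$ and $5$ are adjacent as they must be, and the lower neighbors $1,2$ of $5$ may both lie in $S$. So the $U$-hypothesis must enter the crossing analysis at a vertex your plan does not identify. As written, the proposal contains no argument for the actual content of the lemma.

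A shortest-odd-cycle argument does go through if you anchor it at $s=\max(S\cap V(C))$ instead of at the extremes of $C$; this is the paper's induction step in cycle form.
\begin{itemize}
\item If $S\cap V(C)$ is empty, then $V(C)\subseteq U$ contradicts the forest fact.
\item Otherwise every vertex of $C$ after $s$ is in $U$. Suppose two of them are adjacent, and let $y$ be the largest vertex after $s$ having a cycle neighbor $x$ after $s$. Then $x<y$, so $x$ is the least neighbor of $y$. The other cycle neighbor $z$ of $y$ satisfies $z>x>s$, and $z<y$ by the choice of $y$. So $z$ is a second smaller $U$-neighbor of $y$, which is impossible.
\item Hence the vertices after $s$ are stable in $C$. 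The two cycle neighbors $u<v$ of $s$ come after $s$, so the other neighbor $t$ of $u$ comes before $s$. The crossing edges $tu,sv$ then give $tv\in E(C)$.
\item Thus $C$ is the $4$-cycle $s\,u\,t\,v$, contradicting odd length.
\end{itemize}
Odd length is genuinely needed at this last step. The cycle $1\,3\,2\,4\,1$ on $1<2<3<4$, with $S=\{1,2\}$, satisfies every hypothesis and is not an ordered hole. The paper runs the same argument as an induction on $|V(G)|$, using the inclusion $N_G(u)\subseteq N_G(v)$ for neighbors $u<v$ of $\max S$.
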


\begin{proof}
All the hypotheses are preserved under induced restriction, so we
induct on $|V(G)|$. The assertion is immediate if $G$ has no vertices.
If $S$ is empty,
the greatest vertex has degree at most one: two neighbors $x<y$
would contradict the least-neighbor condition on the edge from $y$
to that vertex. Delete the vertex, color the remaining graph with
two colors by induction, and restore it with an available color.

Suppose $S$ is nonempty, let $s=\max S$, and let
$T=\{v\in V(G):s<v\}$. If $G[T]$ has an edge, choose the greatest vertex $y$ that is
nonisolated in $G[T]$, and let $x\in T$ be a neighbor of $y$. Then $x<y$, and $y$
has no greater neighbor in $G$. Since $x,y\in U$, the least neighbor
of $y$ is $x>s$. Thus $y$ has no neighbor in $S$. It has no second
neighbor in $U$ either, since each such neighbor would also have to
be its least neighbor. Hence $y$ has degree one, and deletion and
restoration again complete the coloring.

It remains to consider the case that $T$ is stable. All neighbors
of $s$ are in $T$. If $u<v$ are neighbors of $s$, then
$N_G(u)\subseteq N_G(v)$. Indeed, any $t\in N_G(u)\setminus\{s\}$
is before $s$, and the edges $tu,sv$ at $t<s<u<v$ give $tv$ by
cappedness.

Color $G-s$ with two colors by induction. If $s$ is isolated, give
it either color. Otherwise let $d=\max N_G(s)$. For any
$u\in N_G(s)\setminus\{d\}$ that is not isolated in $G-s$, choose
$t\in N_G(u)\setminus\{s\}$. The inclusion just proved gives $td$,
so the two-edge path $u,t,d$ in $G-s$ forces $u$ and $d$ to have the
same color. Give every neighbor of $s$ that is isolated in $G-s$
the color of $d$ as well. All neighbors of $s$ now have one color,
so the other color can be assigned to $s$.
\end{proof}

\begin{lemma}\label{lem:boundary}
Let $G$ be a triangle-free capped ordered graph with an ordered
partition $L<X<B$, where $L$ is stable and $B$ is empty or a singleton
$\{b\}$. Every proper coloring of $G[L\cup B]$ with a palette of
three colors, monochromatic on $L$, extends to a proper coloring of
$G$ with the same palette.
\end{lemma}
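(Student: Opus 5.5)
Let the palette be $\{1,2,3\}$, let $L$ have color $1$, and if $B=\{b\}$ let $b$ have color $c_b$. We induct on $|X|$. The goal is to split $X$ into a stable part that will receive a single color and a remaining part that \cref{lem:bipartite} shows to be bipartite, so that the remaining two colors suffice there.

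\emph{Case $c_b\neq1$ or $B$ empty.} Let $c$ be the third color, so $c\notin\{1,c_b\}$. The vertices of $X$ with a neighbor in $L$ cannot receive color $1$. The vertices adjacent to $b$ cannot receive $c_b$. Since $G$ is triangle-free, $N(b)\cap X$ is stable. The same holds for $N(\ell)\cap X$ for each single $\ell\in L$, but not for $N(L)\cap X$ as a whole. So I would give color $c_b$ to a suitable stable set of vertices far from $b$, give color $1$ to vertices with no $L$-neighbor, and check cappedness for the rest.

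A cleaner route is to extend one vertex at a time from the right. Take $x=\max X$ and apply induction to the partition $L<X\setminus\{x\}<\{x\}$ after choosing a color for $x$. The difficulty is that $b$ is lost, since $B$ may hold at most one vertex. To deal with this I would use cappedness: every edge from $b$ to a vertex $u\in X$ with $u<x$, together with any edge from $x$ to a vertex $\ell\in L\cup X$ with $\ell<u$, yields the edge $\ell b$. Hence $b$'s neighbors "dominate" in the sense of the inclusion argument in \cref{lem:bipartite}. Since that lemma was proved just beforehand, I expect its intended use is as follows.

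Set $S$ to be the vertices of $X$ with no neighbor in $L$ that is smaller, namely those whose $G[X]$-neighbors are all greater. Also place $L$ itself in $S$: every vertex of $L$ has only greater neighbors, since $L$ is stable. Then verify that every edge $xy$ of $U$ with $x<y$ has $x$ as the least neighbor of $y$. This should follow from cappedness and triangle-freeness once $U$ is chosen as the vertices whose least neighbor lies in $X$ (rather than in $L$) after an appropriate preprocessing. The bipartition of $G[L\cup X]$ must place $L$ on one side; this holds because all of $L$ lies in the color class of $d=\max N(s)$, as in the argument of \cref{lem:bipartite}, or else by merging. We then 2-color with colors $\{1,c'\}$ and handle $b$ using the third color on its neighborhood, which is a stable set.

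\emph{Main obstacle.} The main obstacle is pinning down the right split: which vertices of $X$ get the "private" third color so that the rest satisfies the least-neighbor hypothesis of \cref{lem:bipartite}. My first attempt would be to take the third color class to be exactly the vertices $y\in X$ that have a neighbor $x<y$ which is not the least neighbor of $y$. I would then show that this set is stable and anticomplete to $b$ (or otherwise compatible with $b$'s color) using cappedness with a crossing edge from the least neighbor.

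If that fails, I would fall back on induction on $|X|$ by deleting $\min X$ and moving it into $L$ when it is forced to color $1$. Otherwise I would split at a vertex $v\in X$ into $L<X_1<\{v\}$ and $\{v\}<X_2<B$, using cappedness to show there are no edges between $X_1$ and $X_2\cup B$ except those through caps. The case $c_b=1$ would be handled separately, by treating $L\cup\{b\}$ as a single color class of color $1$.
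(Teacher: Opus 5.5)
Your proposal lists strategies but does not give a proof, and each concrete step you commit to fails.

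\textbf{The concrete steps that fail.}
Giving $L$'s color to every vertex of $X$ without an $L$-neighbor is impossible in general, and so is $2$-coloring $G[L\cup X]$ with $L$ on one side. Take $L=B=\varnothing$ and let $X$ be the ordered five-cycle $v_1<\cdots<v_5$ from the introduction. It is capped and triangle-free but not bipartite, so the vertices with no $L$-neighbor genuinely need three colors.

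Your candidate ``private'' class is the set of vertices $y$ having a left neighbor other than their least neighbor. It need not be stable. Take $x_1<x_2<y<w<y'$ with edges $x_1y,x_2y,yy',wy'$. There are no crossing edges and no triangles, yet $y$ and $y'$ both lie in the class and are adjacent.

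The right-to-left induction with boundary $\{x\}$ loses $b$, as you note, and your remedy is left vague. A split at an interior vertex $v$ into $L<X_1<\{v\}$ and $\{v\}<X_2<B$ requires $X_1$ and $X_2\cup B$ to be anticomplete, and nothing guarantees this. In the five-cycle, the edge $v_1v_5$ joins $X_1$ to $X_2$ for every $v\in\{v_2,v_3,v_4\}$. Peeling off $\min X$ is essentially the paper's easy case: when no interior vertex has an $L$-neighbor, $\min X$ is colored away from $b$ and becomes the new left boundary. It gives nothing once $\min X$ has an $L$-neighbor.

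\textbf{The missing idea.}
You need the decomposition around $T=N_G(L)\cap X$ when $T\neq\varnothing$. The set $T$ receives only the two colors other than $L$'s color. The rest $X\setminus T$ is not given a single color. Instead, each component $D$ of $G[X\setminus T]$ is colored by the induction hypothesis in the \emph{reversed} order. Its right contacts $R_D\subseteq T\cup B$ form the new left boundary, and its left contact $a_D\in T$, if any, forms the new right boundary.

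This rests on three cappedness facts.
\begin{enumerate}
\item A vertex with a left neighbor $x\in T$ has no $X$-neighbor before $x$; otherwise the cap with an $L$-neighbor of $x$ closes a triangle.
\item Each $D$ lies in a single gap of $T$, and its only possible left contact is the last vertex of $T$ before that gap.
\item If $b$ has no $L$-neighbor, all its $X$-neighbors lie at or after $\max T$. Hence an omitted $b$ of $L$'s color yields $R_D=\{b\}$.
\end{enumerate}

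The reversed induction needs every $R_D$ to be stable and monochromatic, and that is what \cref{lem:bipartite} is for. It is not applied to $G[L\cup X]$. It is applied to an auxiliary ordered graph on $T$, plus $b$ when its color differs from $L$'s, together with one symbol per component $D$. Each symbol is placed at $\min D$ and joined to the right contacts of $D$. The symbols form $S$, and the first two facts give the least-neighbor condition on $U$. Connectedness of the components, together with the gap fact, gives cappedness of the auxiliary graph.

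None of these ingredients appears in your plan.
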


\begin{proof}
We induct on $|X|$, for all graphs and boundary colorings in the
statement. If $X$ is empty, there is nothing to prove.

Let $\alpha$ be the prescribed color on $L$; if $L$ is empty, choose
any color of the palette as $\alpha$. Let $Q$ be the other two colors,
and let $T=N_G(L)\cap X$. Thus $T$ is exactly the set of interior
vertices with a neighbor in $L$.

If $T$ is empty, let $x$ be the least vertex of $X$. Give $x$ a color
different from the color of $b$ when $b$ exists, and any color
otherwise. In the induced graph $G[X\cup B]$, use $\{x\}$ as the
left boundary, $X\setminus\{x\}$ as the interior, and $B$ as the
right boundary. The prescribed boundary coloring is proper, and the
interior is smaller. The induction hypothesis colors this graph.
Together with the original coloring of $L$, this colors $G$: there
are no edges from $L$ to $X$, and the prescribed edges from $L$ to
$B$ are already proper.

Assume for the rest of the proof that $T$ is nonempty. Say that a
vertex \emph{contacts} a set if it has a neighbor in that set. We
need the following three facts about these contacts.

\begin{enumerate}[label=\textup{(\roman*)}]
\item\label{contact:first}
If $x\in T$, $x<y$ with $y\in X\cup B$, and $xy\in E(G)$, then
$y$ has no neighbor $z\in X$ with $z<x$. For otherwise choose an
$L$-neighbor $a$ of $x$. The order $a<z<x<y$ and the edges $ax,zy$
give $ay$ by cappedness. Then $a,x,y$ form a triangle.

\item\label{contact:gaps}
Each component $D$ of $G[X\setminus T]$ is contained in a single gap
of $T$: before its first vertex, between two consecutive vertices of
$T$, or after its last vertex. Moreover, $D$ has at most one contact
in $T$ to its left, and that contact, if present, is the last vertex
of $T$ before the gap.

To prove the first assertion, an edge $xz$ of $G[X\setminus T]$
cannot satisfy $x<y<z$ for $y\in T$. An $L$-neighbor $a$ of $y$
would give $az$ from the crossing edges $ay,xz$, contrary to
$z\notin T$. A path in $G[X\setminus T]$ therefore cannot pass from
one gap to another. For the second assertion, suppose that a vertex
$x\in T$ contacts $D$ and another vertex $y\in T$ is between $x$
and $D$. Choose $d\in D$ adjacent to $x$ and an $L$-neighbor $a$ of
$y$. The order $a<x<y<d$ gives $ad$ from $ay,xd$, contrary to
$d\notin T$.

\item\label{contact:right}
If $b$ exists and has no neighbor in $L$, then every neighbor of $b$
in $X$ is at or after every vertex of $T$. Indeed, if $ub$ is an
edge with $u<x$ for some $x\in T$, an $L$-neighbor $a$ of $x$ gives
$ab$ from $ax,ub$ at $a<u<x<b$, a contradiction. In particular, a
component $D$ of $G[X\setminus T]$ contacting $b$ is after all of
$T$. Its contacts to the right among $T\cup B$ are then exactly
$\{b\}$.
\end{enumerate}

To color $T$ with the two colors in $Q$, retain $b$ if it exists
and its prescribed color is different from $\alpha$; otherwise
omit it for the moment. Let $P$ consist of $T$ and the retained
vertex $b$, if any. Construct an ordered
graph $A$ with vertex set
$P\cup\{s_D:D\in\cD\}$, where $\cD$ is the set of components of
$G[X\setminus T]$. Each $s_D$ is a new symbol placed at the position
of $\min D$ in the order. Keep every original edge with both ends
in $P$. Join $s_D$ to $t\in P$ precisely when every vertex of $D$
is before $t$ and $t$ contacts $D$. There are no other edges.
Thus symbols have only greater neighbors, and no two symbols are
adjacent. The symbols will make all right contacts of a component
receive one color, so that induction can later be applied to that
component in the reversed order.

The components need not be intervals in the order, so we check
cappedness of $A$ using their connectedness. Associate a set
$C_x$ to each vertex $x$ of $A$: $C_x=\{x\}$ for $x\in P$, and
$C_{s_D}=D$ for a symbol. These sets are pairwise disjoint and each
is connected in $G$.

Take edges $xz,yt$ of $A$ with $x<y<z<t$. Their greater endpoints
$z,t$ are in $P$. Also $z\in T$, since a retained $b$ is the last
vertex. By \ref{contact:gaps}, every vertex of $C_x\cup C_y$ is
before $z$. Choose witnessing original edges $cz$ and $dt$ with
$c\in C_x$ and $d\in C_y$. If $c<d$, then
$c<d<z<t$ gives $ct$. If $d<c$, the order in $A$
gives $\min C_x<\min C_y\leq d<c<z$. A path in $G[C_x]$ from its
minimum to $c$ has an edge $ef$ with $e<d<f<z$, since $d\notin C_x$.
The cap of $ef$ and $dt$ is $et$. Equality $c=d$ is impossible by
disjointness. In both cases $C_x$ contacts $t$; all of $C_x$ is before
$t$, so $xt$ is an edge of $A$ by its definition.

For an edge $xy$ of $A$ with $x<y$ and $x,y\in P$, we have $x\in T$.
There is no neighbor of $y$ in $P$ before $x$, by
\ref{contact:first}. Nor can a symbol $s_D<x$ be adjacent to $y$:
\ref{contact:gaps} puts the whole of $D$ before $x$, and a witnessing
edge from $D$ to $y$ would contradict \ref{contact:first}. Therefore
$x$ is the least neighbor of $y$ in $A$.
\Cref{lem:bipartite} applies with $S$ the set of symbols and $U=P$.

Properly color $A$ with the two colors in $Q$. If $b$ was retained,
interchange the two colors on its connected component if needed to
give $b$ its prescribed color. This is the only prescribed color
in $A$. Transfer the colors of the vertices of $T$ to $G$ and discard
the symbol colors. This properly colors $G[L\cup T\cup B]$: the colors on $T$
are in $Q$, every edge within $T$ or from $T$ to a retained $b$
was colored in $A$, and an omitted $b$ has color $\alpha$.
Edges from $L$ to $B$ retain their prescribed colors.

For $D\in\cD$, let $a_D$ denote its possible contact in $T$ to the
left, which is unique by \ref{contact:gaps}. Let $R_D$ be its full
set of contacts in $T\cup B$ to the right. Each vertex of $T$ is
either wholly before or wholly after $D$, so these sets account for
every contact of $D$ in $T\cup B$.

If $D$ does not contact an omitted $b$, then $R_D=N_A(s_D)$.
All these vertices have the color opposite to the color of $s_D$
in the two-coloring of $A$, and hence $R_D$ is monochromatic. It is
also stable in $G$: every original edge between two of its vertices
was retained in $A$. If $D$ contacts an omitted $b$, then $b$ has
color $\alpha$. Properness of the prescribed coloring on $L\cup B$
implies that $b$ has no neighbor in $L$. By \ref{contact:right},
$R_D=\{b\}$. Thus in every case $R_D$ is stable and monochromatic;
an empty $R_D$ is allowed.

Consider the induced graph on $R_D\cup D\cup\{a_D\}$, omitting
$a_D$ if there is no left contact, and reverse its order. Its left
boundary is now $R_D$, its interior is $D$, and its optional right
boundary is $a_D$. Its boundary coloring is proper because all
edges from $a_D$ to $R_D$ are already properly colored in
$G[L\cup T\cup B]$. The graph remains triangle-free and capped.
Since $T$ is nonempty, $|D|<|X|$, so the induction hypothesis extends
this boundary coloring over $D$.

Do this for each component $D$. Different interiors are anticomplete
because they are different components of $G[X\setminus T]$.
They have no neighbors in $L$, by the definition of $T$, and all of
their contacts in $T\cup B$ were included in their boundaries.
Every extension preserves the already assigned colors, so the
extensions combine to color all of $G$. If $X=T$, no further extension is needed.
\end{proof}

Taking both boundaries empty in \cref{lem:boundary} gives the following
consequence.

\begin{corollary}\label{cor:triangle-free}
Every triangle-free capped ordered graph is three-colorable.
\end{corollary}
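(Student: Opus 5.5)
The corollary follows by applying \cref{lem:boundary} with both boundaries empty. Let $G$ be a triangle-free capped ordered graph. If $G$ has no vertices there is nothing to prove. Otherwise take the ordered partition $L<X<B$ with $L=B=\emptyset$ and $X=V(G)$, and fix any palette of three colors.

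We check the hypotheses of \cref{lem:boundary}. The set $L$ is empty, hence stable, and $B$ is empty, which is allowed. The prescribed coloring of $G[L\cup B]$ is the empty coloring of the empty graph. It is vacuously proper, uses only the palette, and is vacuously monochromatic on $L$. The proof of \cref{lem:boundary} already handles an empty $L$: it then picks $\alpha$ arbitrarily.

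\Cref{lem:boundary} therefore extends this empty coloring to a proper coloring of all of $G$ with the three colors of the palette, so $\chi(G)\leq 3$.

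There is no real obstacle here. The only point to confirm is that the lemma's statement and induction permit empty $L$ and empty $B$, and the statement explicitly says ``$B$ is empty or a singleton''.
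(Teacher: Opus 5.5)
Your proposal is correct and is exactly the paper's argument: apply \cref{lem:boundary} with $L=B=\varnothing$ and $X=V(G)$. The empty prescribed coloring is vacuously proper and monochromatic on $L$, and the lemma's statement and proof explicitly permit empty boundaries.
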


\section{Edge ranks and the exponential bound}\label{sec:ranks}\label{sec:exponential}

Davies, Krawczyk, McCarty and Walczak~\cite{DaviesEtAl2023} proved that the
edges of a capped graph $G$ with an edge can be partitioned into $\omega(G)-1$
triangle-free capped subgraphs. We give a rank proof of this
decomposition; the rank inequalities will also be used in the
forest construction.

Fix a capped ordered graph $G$ with an edge, and write
$w=\omega(G)$ and $q=w-1$. For each original edge $uv$ with $u<v$,
define its \emph{rank} by
\begin{equation}\label{eq:rank}
 p(uv)=\omega\bigl(G[\{x\in N_G(v):x\leq u\}]\bigr).
\end{equation}
The set in this definition contains $u$, so $p(uv)\geq1$.
Adjoining $v$ to a clique in that set gives a larger clique of $G$,
so $p(uv)\leq q$. A clique attaining $p(uv)$ will be used as a
witness for the rank. Such a clique need not contain $u$.
For a path $a<b<c$, we call $p(ab)$ and $p(bc)$ its incoming and
outgoing ranks, respectively.

\Needspace{8\baselineskip}
\begin{lemma}\label{lem:ranks}
The ranks in \eqref{eq:rank} have the following properties.
\begin{enumerate}[label=\textup{(\roman*)}]
\item\label{rank:transport}
If $ac,bd\in E(G)$ and $a\leq b<c\leq d$, then
$ad\in E(G)$ and $p(ac)\leq p(ad)\leq p(bd)$.
\item\label{rank:triangle}
If $a<b<c$ form a triangle in $G$, then
$p(bc)\geq p(ab)+1$.
\end{enumerate}
\end{lemma}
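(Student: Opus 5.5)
For part (i), first establish that $ad\in E(G)$. If $a=b$ then $ad=bd$ is already an edge. If $c=d$ then $ad=ac$ is already an edge. Otherwise $a<b<c<d$, and the edges $ac,bd$ cross, so cappedness gives $ad$.

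For the rank inequalities, compare the sets whose clique numbers define the ranks: $N_c(a)=\{x\in N_G(c):x\le a\}$, $N_d(a)=\{x\in N_G(d):x\le a\}$, and $N_d(b)=\{x\in N_G(d):x\le b\}$. Since $a\le b$, we have $N_d(a)\subseteq N_d(b)$, which gives $p(ad)\le p(bd)$ immediately.

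For $p(ac)\le p(ad)$, it suffices to show $N_c(a)\subseteq N_d(a)$. Take $x\in N_G(c)$ with $x\le a$. If $x=a$, then $x$ is adjacent to $d$ since $ad$ was just shown to be an edge. If $x<a$ and $c=d$, there is nothing to show. If $x<a$ and $c<d$, then $x<a\le b<c<d$, so the edges $xc$ and $bd$ satisfy $x<b<c<d$ and cross. Cappedness gives $xd$, so $x\in N_d(a)$. Monotonicity of $\omega$ under induced subgraphs then gives $p(ac)\le p(ad)$.

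For part (ii), let $K$ be a clique in $\{x\in N_G(b):x\le a\}$ of size $p(ab)$. It suffices to show that $K\cup\{b\}$ is a clique contained in $\{x\in N_G(c):x\le b\}$, since that set then has clique number at least $p(ab)+1$. Clearly $b\le b$ and $bc\in E(G)$, and every vertex of $K$ is adjacent to $b$ and precedes $b$. It remains to check that each $x\in K$ is adjacent to $c$. For $x=a$ this holds because $ac\in E(G)$. For $x<a$, apply part (i) with the edges $xb$ and $ac$: the order $x\le a<b\le c$ gives $xc\in E(G)$ (directly, $x<a<b<c$ with crossing edges $xb,ac$ has cap $xc$). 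Hence $p(bc)\ge|K|+1=p(ab)+1$.

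The only point needing care is checking that every crossing configuration is genuinely strict, and handling the degenerate equalities $a=b$, $c=d$, and $x=a$ separately; none of these is a real obstacle.
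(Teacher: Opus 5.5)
Your proof is correct and follows essentially the same route as the paper. In (i) you move the prefix of $N_G(c)$ into the prefix of $N_G(d)$ using caps of crossing edges; you prove the whole set inclusion, whereas the paper transfers only a witnessing clique. In (ii) you extend a witness clique for $p(ab)$ by $b$, after placing it in $N_G(c)$ by the same cap argument, and you handle the degenerate cases $a=b$, $c=d$, $x=a$ correctly.
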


\begin{proof}
For (i), the edge $ad$ is already one of the assumed edges if $a=b$
or $c=d$. In the remaining case it follows from cappedness. Let $K$
be a clique witnessing $p(ac)$, so each $x\in K$ satisfies $x\leq a$
and $xc\in E(G)$.

If $c<d$ and $x<b$, the order $x<b<c<d$ and the edges $xc,bd$
give $xd$. The only remaining possibility is $x=b$, which forces
$x=a=b$ and uses the given edge $bd$. If $c=d$, the adjacency to
$d$ was already part of the choice of $K$. Thus $K$ is contained
in $N_G(d)\cap\{x:x\leq a\}$, proving $p(ac)\leq p(ad)$.
The inclusion
$N_G(d)\cap\{x:x\leq a\}\subseteq N_G(d)\cap\{x:x\leq b\}$
proves the second inequality.

For (ii), take a clique $K$ witnessing $p(ab)$. The same transfer
argument, applied to the edges $ab$ and $ac$, puts every vertex of
$K$ in $N_G(c)$. Each vertex of $K$ is also adjacent to $b$ and is
at most $a<b$. Hence $K\cup\{b\}$ is a clique in the prefix of
$N_G(c)$ through $b$, which proves (ii).
\end{proof}

For $1\leq h\leq q$, let $G_h$ be the spanning subgraph of $G$
whose edges have rank exactly $h$. If two edges of $G_h$ cross, \cref{lem:ranks}\ref{rank:transport} forces
their cap also to have rank $h$. Thus $G_h$ is capped.
It is triangle-free by \cref{lem:ranks}\ref{rank:triangle}, since a triangle
$a<b<c$ satisfies $p(bc)>p(ab)$.

By \cref{cor:triangle-free}, fix a proper coloring
$\varphi_h:V(G)\to\{1,2,3\}$ of each $G_h$. Give $v$ the tuple
$(\varphi_1(v),\ldots,\varphi_q(v))$. Every original edge has some
rank $h$, and its ends receive different entries in coordinate $h$.
We obtain the following bound.

\begin{proposition}\label{prop:exponential}
Every nonempty capped ordered graph $G$ satisfies
$\chi(G)\leq3^{\omega(G)-1}$.
\end{proposition}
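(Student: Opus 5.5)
The argument splits according to whether $G$ has an edge. If $G$ has no edge, then $\omega(G)=1$ because $G$ is nonempty. In this case one color suffices, so $\chi(G)=1=3^{0}$ and the bound holds.

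Now suppose $G$ has an edge, and write $w=\omega(G)\geq2$ and $q=w-1$. We use the spanning subgraphs $G_1,\ldots,G_q$ defined above, where $G_h$ keeps exactly the edges of rank $h$.

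\begin{itemize}
\item Every edge has a rank in $\{1,\ldots,q\}$ by \eqref{eq:rank}, so each edge lies in exactly one $G_h$.
\item Each $G_h$ is capped. If two rank-$h$ edges $ac$ and $bd$ cross, with $a<b<c<d$, then \cref{lem:ranks}\ref{rank:transport} gives $h=p(ac)\leq p(ad)\leq p(bd)=h$. Hence the cap $ad$ also has rank $h$ and lies in $G_h$.
\item Each $G_h$ is triangle-free. A triangle $a<b<c$ in $G_h$ would have $p(ab)=p(bc)=h$, contradicting \cref{lem:ranks}\ref{rank:triangle}.
\end{itemize}

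By \cref{cor:triangle-free}, each $G_h$ has a proper coloring $\varphi_h$ with three colors. Give each vertex $v$ the product color $(\varphi_1(v),\ldots,\varphi_q(v))\in\{1,2,3\}^q$. Take any edge $uv$ of $G$ and let $h$ be its rank. Then $uv$ is an edge of $G_h$, so $\varphi_h(u)\neq\varphi_h(v)$, and the two tuples differ. The product coloring is therefore proper and uses at most $3^{q}=3^{\omega(G)-1}$ colors.

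There is no real obstacle left, since the substantive work is in \cref{lem:ranks} and \cref{cor:triangle-free}. The only points needing care are the edgeless case and confirming that ranks lie in $\{1,\ldots,q\}$, so that exactly $q$ coordinates suffice.
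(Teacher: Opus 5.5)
Your proposal is correct and follows the paper's argument essentially verbatim. Like the paper, you partition the edges by rank into the spanning subgraphs $G_1,\ldots,G_q$, use \cref{lem:ranks} to show each is capped and triangle-free, three-color each via \cref{cor:triangle-free}, and take the product coloring, with the edgeless case handled by a single color.
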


The edgeless case needs one color. When this proposition is applied
to a different graph, its ranks are computed within that graph;
the ranks $p$ assigned to the edges of $G$ remain fixed.

\section{The cubic bound}\label{sec:cubic}\label{sec:forests}

\subsection{Selecting forests from ordered neighborhoods}\label{sec:selection}

We next select forests whose edges meet certain induced paths on
three vertices. Let $G$ be a capped ordered graph.

Let $B\subseteq V(G)$ be a set of potential centers. For each
$b\in B$, choose sets $A_b$ and $C_b$ of neighbors of $b$ such that
$A_b<b<C_b$; call them its left and right shores. These sets may be
empty, and shores of different centers may overlap. A vertex in a
shore may itself be a potential center.

\Needspace{5\baselineskip}
\begin{lemma}\label{lem:forest}
Suppose that a graph $\Gamma$ on $B$ satisfies the following two
conditions.
\begin{enumerate}[label=\textup{(\roman*)}]
\item\label{shore:incidence}
If $b,t\in B$ and $t\in A_b\cup C_b$, then $bt\in E(\Gamma)$.
\item\label{shore:crossing}
If $r<a<c<t$, where $r,t\in B$, $a\in A_t$, and $c\in C_r$,
then $rt\in E(\Gamma)$.
\end{enumerate}
Let $W\subseteq B$ be stable in $\Gamma$. Each vertex
$v\in V(G)\setminus W$ makes the following selections, when the
indicated set of centers is nonempty:
\begin{itemize}
\item it selects the least $b\in W$ with $b<v$ and $v\in C_b$;
\item it selects the greatest $b\in W$ with $v<b$ and $v\in A_b$.
\end{itemize}
Let $F_W$ be the spanning subgraph of $G$ consisting of the selected
edges. Then $F_W$ is a forest. Moreover, whenever $a<b<c$ is an
induced path in $G$ with $b\in W$, $a\in A_b$ and $c\in C_b$,
at least one of $ab,bc$ belongs to $F_W$.
\end{lemma}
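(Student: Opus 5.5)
The plan is to treat the two assertions separately. Both rest on the same observation: if $r<t$ are vertices of $W$, condition~\ref{shore:crossing} together with stability of $W$ in $\Gamma$ forbids a configuration $r<a<c<t$ with $a\in A_t$ and $c\in C_r$. Also, by condition~\ref{shore:incidence}, no vertex of $W$ lies in a shore of another vertex of $W$.

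\emph{Structure of $F_W$.} Every selected edge joins a vertex $v\notin W$ to a center in $W$. Vertices of $W$ select nothing, and no vertex of $W$ is selected by another vertex of $W$. Hence $F_W$ is bipartite between $W$ and $V(G)\setminus W$. Each $v\notin W$ has degree at most two in $F_W$: at most one neighbor $\ell(v)<v$ with $v\in C_{\ell(v)}$, chosen least, and at most one neighbor $r(v)>v$ with $v\in A_{r(v)}$, chosen greatest.

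\emph{No cycles.} Suppose $F_W$ has a cycle $Z$. Every vertex of $Z$ outside $W$ has both $\ell(v)$ and $r(v)$ on $Z$, so the greatest vertex of $Z$ is some $t\in W$. Its two neighbors $a<a'$ on $Z$ lie in $A_t$ and both have $r(a)=r(a')=t$. Let $P$ be the path $Z-t$ from $a$ to $a'$; it starts with the edge $a\,\ell(a)$, where $\ell(a)<a$, and every vertex of $P$ is below $t$.

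\begin{itemize}
\item Along $P$, I would look at the first edge that jumps from a vertex at most $a$ to a vertex greater than $a$. Such an edge exists because $a'>a$.
\item If this edge goes to the right from a center $s\in W$ with $s<a$, to a vertex $c\in C_s$ with $a<c<t$, then $s<a<c<t$ violates the forbidden configuration for the pair $(s,t)$.
\item Otherwise the jump goes from a non-$W$ vertex $u\le a$ to its right center $r(u)>a$. In that case I would use maximality in the choice of $r(\cdot)$, or repeat the argument with the pair $(s,t)$ for a suitable later center, and possibly use cappedness of $G$ on the crossing edges $u\,r(u)$ and $a t$.
\end{itemize}

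This case analysis, and in particular choosing the right extremal vertex (greatest or least), so that the ``least'' and ``greatest'' selection rules together with~\ref{shore:crossing} close every case, is the step I expect to be the main obstacle. If the maximal vertex does not work directly, I would instead choose $t$ and the pair $a,a'$ to minimize the span of $P$.

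\emph{The covering assertion.} Let $a<b<c$ be an induced path with $b\in W$, $a\in A_b$ and $c\in C_b$. Then $a,c\notin W$; for instance, if $a\in W$, condition~\ref{shore:incidence} would give $ab\in E(\Gamma)$, contradicting stability. So $a$ selects some greatest $b'\ge b$ and $c$ selects some least $b''\le b$. Suppose neither $ab$ nor $bc$ is in $F_W$, so $b''<b<b'$.
\begin{itemize}
\item If $b''<a$ and $c<b'$, then $b''<a<c<b'$ with $a\in A_{b'}$ and $c\in C_{b''}$ is forbidden.
\item If $a<b''$, then the edges $ab$ and $b''c$ cross at $a<b''<b<c$, so cappedness gives $ac$, contradicting inducedness.
\item If $b'<c$, then the edges $ab'$ and $bc$ cross at $a<b<b'<c$, and again cappedness gives $ac$.
\end{itemize}
The cases $a=b''$ and $c=b'$ are impossible because $a,c\notin W$. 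Hence at least one of $ab,bc$ belongs to $F_W$.
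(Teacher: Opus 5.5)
Your covering argument is correct and is the same as the paper's. The gap is in the proof that $F_W$ is acyclic, at exactly the step you flag.

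In your second case, the first edge of $P$ rising above $a$ runs from an outside vertex $u<a$ to $s'=r(u)$. This gives $u<a<s'<t$ with $u\in A_{s'}$ and $a\in A_t$, which is not contradictory by itself:
\begin{itemize}
\item Both vertices are \emph{left}-shore members, so condition~(ii) does not apply to the pair $(s',t)$.
\item Cappedness only gives $ut\in E(G)$. In the lemma the shores are arbitrary subsets of the neighbourhoods (in the application, membership in $A_t$ needs rank exactly $i$). So you cannot conclude $u\in A_t$, and maximality of $r(u)$ cannot be invoked.
\item The contradiction need not involve $t$ at all. Take the cycle $t,a,\ell(a),u,s',a',t$ with order $\ell(a)<u<a<s'<a'<t$. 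Everything you derive is consistent with it. The only forbidden pattern is $\ell(a)<u<a<s'$ with $u\in A_{s'}$ and $a\in C_{\ell(a)}$, which concerns the pair $(\ell(a),s')$.
\end{itemize}
Your fallback of minimizing the span of $P$ is not carried out, so acyclicity is not proved.

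The missing ingredient is a global count rather than a local analysis at the maximum. A cycle of $F_W$ alternates between $k\geq2$ centers and $k$ outside vertices. Each outside vertex $v$ on it has both $\ell(v)<v<r(v)$ on the cycle. Hence every outside cycle vertex lies in one of the $k-1$ open gaps between consecutive cycle centers, and by pigeonhole two of them, $v<w$, share a gap. Then $r=\ell(w)$ and $t=r(v)$ satisfy $r<v<w<t$ with $v\in A_t$ and $w\in C_r$. This is precisely your forbidden configuration: condition~(ii) puts $rt$ in $\Gamma$, contradicting stability of $W$. This is the paper's argument.

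Equivalently, your forbidden configuration forces a cycle center strictly between any two outside cycle vertices. Adding one center below the least outside vertex and one above the greatest, this needs $k+1$ centers, but there are only $k$.
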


\begin{proof}
No vertex of $W$ belongs to a shore of another vertex of $W$:
condition (i) would give an edge inside the stable set $W$ in
$\Gamma$. In particular, the endpoints $a,c$ of every path in the
statement are outside $W$. Each selected edge has exactly one end
in $W$ and one end outside it, and each vertex outside $W$ selects
at most one center on either side.

Suppose that $F_W$ has a cycle. It alternates between $k$ distinct
centers in $W$ and $k$ distinct vertices outside $W$, for some
$k\geq2$. Each outside vertex on the cycle has two selected cycle
edges. Since it selects at most one center on each side, it is
strictly between its two cycle centers.

Order the $k$ cycle centers. Every outside vertex on the cycle is
in one of the $k-1$ gaps between consecutive cycle centers. Since
there are $k$ such outside vertices, two of them, say $v<w$, are
in the same gap. Let $r$ be the cycle center selected by $w$ to its
left, and let $t$ be the cycle center selected by $v$ to its right.
There is no cycle center between $v$ and $w$, so $r<v<w<t$.
The selections give $v\in A_t$ and $w\in C_r$. Condition (ii)
therefore gives $rt\in E(\Gamma)$, contrary to $r,t\in W$.
Thus $F_W$ is a forest.

Now consider an induced path $a<b<c$ in the statement. The center
$b$ is available for both endpoint selections. If neither path edge
were selected, then $a$ would select a later center $t>b$, by the
greatest-center rule, and $c$ would select an earlier center $r<b$,
by the least-center rule. We claim that $t>c$ and $r<a$.

If $b<t<c$, the crossing edges $at,bc$ would give $ac$, contrary
to inducedness; if $t=c$, the selected edge $at$ would itself be
$ac$. Thus $t>c$. Similarly, if $a<r<b$, the crossing edges
$ab,rc$ would give $ac$, and $r=a$ would give $ac$ directly.
Thus $r<a$. We now have $r<a<c<t$ with $a\in A_t$ and
$c\in C_r$. Condition (ii) again gives an edge $rt$ inside $W$,
a contradiction. Hence at least one path edge was selected.
\end{proof}

\subsection{Fixed families for pairs of ranks}\label{sec:rows}

Return to the graph $G$ and the ranks $p$ fixed in
\cref{sec:ranks}. For every
pair $1\leq j\leq i\leq q$, define the full shores at a vertex $b$ by
\begin{equation}\label{eq:shores}
 \begin{aligned}
 A_b&=\{a\in V(G):a<b,\ ab\in E(G),\ p(ab)=i\},\\
 C_b&=\{c\in V(G):b<c,\ bc\in E(G),\ p(bc)=j\}.
 \end{aligned}
\end{equation}
For $a\in A_b$ and $c\in C_b$, the path $a<b<c$ is induced in
$G$. Otherwise \cref{lem:ranks}\ref{rank:triangle} would give $j\geq i+1$,
contrary to $j\leq i$.

For each pair $(i,j)$, we construct a family of forests meeting
every such path. To do so, we define a graph $\Gamma$ satisfying
the two conditions of \cref{lem:forest} and apply that lemma to
each color class of a proper coloring of $\Gamma$.
The number of forests is at most the number of colors used.

\paragraph{Equal ranks.}

When $i=j$, take $B=V(G)$ and $\Gamma=G_i$. The first condition
of \cref{lem:forest} follows directly from \eqref{eq:shores}.
For its second condition, suppose $r<a<c<t$, $a\in A_t$ and
$c\in C_r$. Both $at$ and $rc$ have rank $i$, so
\cref{lem:ranks}\ref{rank:transport} gives the edge $rt$ of rank $i$.
Use the fixed three-coloring $\varphi_i$ of $G_i$ from
\cref{sec:ranks}, including its color assignments to isolated
vertices. Applying \cref{lem:forest} in the ambient graph $G$
to its three color classes gives at most three forests.
These meet every induced path $a<b<c$ with $p(ab)=p(bc)=i$.

\paragraph{Strictly decreasing ranks.}

Fix $i>j$, and now take
$B=\{b\in V(G):A_b\ne\varnothing,\ C_b\ne\varnothing\}$.
Only these centers occur in the paths to be covered. For $b\in B$,
let $\alpha_b=\min A_b$ and $\beta_b=\max C_b$. We first record
two consequences of the rank inequalities.

If $x<y$ are in $B$, $xy\in E(G)$ and $j<p(xy)<i$, then
\begin{equation}\label{eq:barriers}
 \beta_x<y\qquad\text{and}\qquad x<\alpha_y.
\end{equation}
Indeed, $y\leq\beta_x$ would give
$p(xy)\leq p(x\beta_x)=j$ by \cref{lem:ranks}\ref{rank:transport}. Likewise,
$\alpha_y\leq x$ would give
$i=p(\alpha_y y)\leq p(xy)$. Both contradict the strict rank
assumption.

Also, if $x,y\in B$ and $x<y<\beta_x$, then
$\beta_y\leq\beta_x$.
If instead $\beta_x<\beta_y$, the rank-$j$ edges
$x\beta_x$ and $y\beta_y$ cross. Their cap $x\beta_y$ has rank
$j$ by \cref{lem:ranks}\ref{rank:transport}, so $\beta_y\in C_x$. This contradicts
the maximality of $\beta_x$.

Define $\Gamma$ on $B$ as follows. An original edge $xy$, $x<y$,
belongs to $\Gamma$ if its rank is $j$ or $i$, or if its rank is
strictly between $j$ and $i$ and
\begin{equation}\label{eq:overlap}
 x<\alpha_y<\beta_x<y.
\end{equation}
No other edges belong to $\Gamma$. For a strictly intermediate rank, the inequalities
$x<\alpha_y$ and $\beta_x<y$ already follow from
\eqref{eq:barriers}.

\begin{lemma}\label{lem:center}
The graph $\Gamma$ is capped and has clique number at most four.
It also satisfies both conditions of \cref{lem:forest} for the
shores in \eqref{eq:shores} and the ambient graph $G$.
\end{lemma}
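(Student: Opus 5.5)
The plan is to check the four assertions in increasing order of difficulty: condition (i), then condition (ii), then cappedness of $\Gamma$, then $\omega(\Gamma)\le4$. Throughout I would use only \cref{lem:ranks}, the barrier inequalities \eqref{eq:barriers}, and the monotonicity fact that $x<y<\beta_x$ implies $\beta_y\le\beta_x$. I would also record its mirror image, obtained by reversing the argument on rank-$i$ edges: if $\alpha_y<x<y$ with $x,y\in B$, then $\alpha_x\ge\alpha_y$.

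\emph{Condition (i).} If $t\in A_b$, then $tb$ is an original edge of rank $i$. If $t\in C_b$, it has rank $j$. In either case $tb\in E(\Gamma)$ by definition, since both ends lie in $B$.

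\emph{Condition (ii).} Take $r<a<c<t$ with $a\in A_t$ and $c\in C_r$. The edges $rc$ and $at$ cross, so \cref{lem:ranks}\ref{rank:transport} gives $rt\in E(G)$ with $j=p(rc)\le p(rt)\le p(at)=i$. If $p(rt)\in\{i,j\}$ we are done. Otherwise the rank is strictly intermediate, and we need \eqref{eq:overlap}. By the definitions of $\alpha_t$ and $\beta_r$ we have $\alpha_t\le a<c\le\beta_r$. Together with \eqref{eq:barriers} this gives $r<\alpha_t<\beta_r<t$.

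\emph{Cappedness.} Let $xz,yt\in E(\Gamma)$ with $x<y<z<t$. Cappedness of $G$ and \cref{lem:ranks}\ref{rank:transport} give $xt\in E(G)$ with $p(xz)\le p(xt)\le p(yt)$. If $p(xz)=i$ or $p(yt)=j$, the cap has rank $i$ or $j$ and lies in $\Gamma$. So assume $p(xt)$ is strictly intermediate. We must show $\alpha_t<\beta_x$, since the outer inequalities follow from \eqref{eq:barriers}.

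To this end I would prove two bounds, $\beta_x\ge z$ or $\beta_x>\alpha_z$ (according to whether $p(xz)=j$ or is intermediate), and $\alpha_t\le y$ or $\alpha_t<\beta_y$ (according to whether $p(yt)=i$ or is intermediate).
\begin{itemize}
\item When $p(xz)=j$ and $p(yt)=i$, this gives $\alpha_t\le y<z\le\beta_x$ immediately.
\item In the mixed cases, I would suppose $\beta_x\le\alpha_t$ and derive a crossing pair. For instance, when $xz$ is intermediate, the edges $x\beta_x$ (rank $j$) and $\alpha_t t$ (rank $i$) together with $xz$ or $yt$ should cross in a way that forces, via the monotonicity facts, either $\beta_x\ge\alpha_t$ or a cap contradicting maximality of $\beta_x$ or minimality of $\alpha_t$.
\end{itemize}
I expect this case analysis, especially the case where both $xz$ and $yt$ are intermediate, to be the main obstacle. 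I would try first to reduce it to the configuration $x<\alpha_z<\beta_x<z$ combined with $y<\alpha_t<\beta_y<t$ and apply the two monotonicity facts to the pairs $(x,y)$ and $(z,t)$.

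\emph{Clique bound.} Let $k_1<\dots<k_m$ be a clique of $\Gamma$, hence of $G$. Applying \cref{lem:ranks}\ref{rank:triangle} to each triangle $k_{l-1}<k_l<k_{l+1}$ gives
\[
  p(k_1k_2)<p(k_2k_3)<\dots<p(k_{m-1}k_m).
\]
All these ranks lie in $[j,i]$, so at most one of them equals $j$ and at most one equals $i$. Hence if $m\ge5$, two consecutive edges $k_{l-1}k_l$ and $k_lk_{l+1}$ both have intermediate rank. Then \eqref{eq:overlap} holds for each of them, and in particular
\[
  \alpha_{k_{l+1}}<\beta_{k_l}<k_{l+1}
  \qquad\text{and}\qquad
  k_{l-1}<\alpha_{k_l}<\beta_{k_{l-1}}<k_l .
\]
I would derive a contradiction from these inequalities. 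The plan is to use the rank-$j$ edge $k_{l-1}\beta_{k_{l-1}}$ against the edge $k_{l-1}k_{l+1}$: by \cref{lem:ranks}\ref{rank:transport} it forces $p(k_{l-1}k_{l+1})$ down toward $j$. Symmetrically, the rank-$i$ edge $\alpha_{k_{l+1}}k_{l+1}$ forces it up toward $i$. Combining these with the strict increase along the clique should leave no room, giving $m\le4$.
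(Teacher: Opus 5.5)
Conditions (i) and (ii) are handled as in the paper. In the clique bound, your reduction to two consecutive clique edges $k_{l-1}k_l$, $k_lk_{l+1}$ of strictly intermediate rank is also correct. The finish you propose, however, does not work. Since $\beta_{k_{l-1}}<k_{l+1}$ and $k_{l-1}<\alpha_{k_{l+1}}$, \cref{lem:ranks}\ref{rank:transport} applied to $k_{l-1}\beta_{k_{l-1}}$ and $\alpha_{k_{l+1}}k_{l+1}$ gives only $j\le p(k_{l-1}k_{l+1})\le i$. The rank-$j$ edge bounds the chord from below and the rank-$i$ edge from above, which is the reverse of what you claim and carries no information. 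The strict increase along the clique does not help either: the chord is not a consecutive edge, and its rank may lie anywhere between $p(k_{l-1}k_l)$ and $p(k_lk_{l+1})$.

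The missing idea is that the chord $k_{l-1}k_{l+1}$ is itself an edge of $\Gamma$.
\begin{itemize}
\item Transport with a common endpoint gives $j<p(k_{l-1}k_l)\le p(k_{l-1}k_{l+1})\le p(k_lk_{l+1})<i$.
\item So the chord has strictly intermediate rank and must satisfy \eqref{eq:overlap}; in particular $\alpha_{k_{l+1}}<\beta_{k_{l-1}}$.
\item The two short edges give $\beta_{k_{l-1}}<k_l<\alpha_{k_{l+1}}$, a contradiction.
\end{itemize}
Your display of the overlap conditions omitted $k_l<\alpha_{k_{l+1}}$, which is exactly the inequality needed here. This is the paper's argument.

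The cappedness part is also left unfinished ("should", "main obstacle"), although your tools nearly suffice. Both mixed cases follow at once from your two monotonicity facts.
\begin{itemize}
\item If $p(xz)=j$ and $yt$ is intermediate, then $y<z\le\beta_x$ gives $\beta_y\le\beta_x$, so $\alpha_t<\beta_y\le\beta_x$.
\item If $xz$ is intermediate and $p(yt)=i$, then $\alpha_t\le y<z$ gives $\alpha_t\le\alpha_z<\beta_x$ by your mirror fact applied to $(z,t)$.
\end{itemize}
When both edges are intermediate, the first fact again handles $y<\beta_x$. For $\beta_x\le y$ you need one more crossing, which your sketch does not identify; the relevant edge is neither $x\beta_x$ nor $\alpha_tt$ but the rank-$i$ edge $\alpha_zz$. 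From $\alpha_z<\beta_x\le y<z<t$, transporting $\alpha_zz$ across $yt$ gives $i\le p(\alpha_zt)\le p(yt)$, which contradicts intermediacy of $yt$.

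The paper organizes this more economically by splitting only on $y<\beta_x$ versus $\beta_x\le y$. In the second branch $p(xz)=j$ is impossible, and the same crossing yields $\alpha_z\in A_t$, hence $\alpha_t\le\alpha_z<\beta_x$.
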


\begin{proof}
\emph{Cappedness.}
Take edges $xz,yt\in E(\Gamma)$ with $x<y<z<t$.
Their cap $xt$ is an original edge, and if $h=p(xt)$, then
\cref{lem:ranks}\ref{rank:transport} gives
$j\leq p(xz)\leq h\leq p(yt)\leq i$.
If $h=j$ or $h=i$, the definition includes $xt$ in $\Gamma$.
Assume $j<h<i$. By \eqref{eq:barriers},
$\beta_x<t$ and $x<\alpha_t$. It remains to prove
$\alpha_t<\beta_x$.

First suppose $y<\beta_x$. Since $p(yt)\geq h>j$, the edge
$yt$ either has rank $i$ or has a strictly intermediate rank.
In the former case $y\in A_t$, so
$\alpha_t\leq y<\beta_x$. In the latter case $yt\in E(\Gamma)$
gives $\alpha_t<\beta_y$ by \eqref{eq:overlap}; and the observation for
$x<y<\beta_x$ gives $\beta_y\leq\beta_x$.

It remains to consider $\beta_x\leq y$. We have $p(xz)\leq h<i$.
The rank of $xz$ cannot be $j$, since then $z\in C_x$ would give
$z\leq\beta_x\leq y$, contrary to $y<z$. Therefore $xz$ has
a strictly intermediate rank and satisfies \eqref{eq:overlap}.
In particular,
$\alpha_z<\beta_x\leq y<z<t$. Applying
\cref{lem:ranks}\ref{rank:transport} to the original edges $\alpha_z z$ and
$yt$ gives an edge $\alpha_z t$ and
$i=p(\alpha_z z)\leq p(\alpha_z t)\leq p(yt)\leq i$.
Thus $\alpha_z\in A_t$ and
$\alpha_t\leq\alpha_z<\beta_x$. This case includes
$\beta_x=y$. In both cases \eqref{eq:overlap} holds for $xt$,
so $xt\in E(\Gamma)$.

\emph{The clique bound.}
Suppose $v_1<\cdots<v_5$ form a clique in $\Gamma$.
Every one of its edges is an original edge of rank between $j$
and $i$. The triangle $v_1,v_2,v_3$ gives
$p(v_2v_3)\geq p(v_1v_2)+1>j$.
The triangle $v_3,v_4,v_5$ gives
$p(v_3v_4)\leq p(v_4v_5)-1<i$.
Using \cref{lem:ranks}\ref{rank:transport} with a common left or right endpoint,
we obtain
$j<p(v_2v_3)\leq p(v_2v_4)\leq p(v_3v_4)<i$.
All three middle edges therefore satisfy \eqref{eq:overlap}.
The edges $v_2v_3$ and $v_3v_4$ give
$\beta_{v_2}<v_3<\alpha_{v_4}$, whereas $v_2v_4$ gives
$\alpha_{v_4}<\beta_{v_2}$. This contradiction proves
$\omega(\Gamma)\leq4$.

\emph{The two selection conditions.}
If another center is in a shore of $b$, its edge to $b$ has rank
$i$ or $j$, and hence is included in $\Gamma$. This proves the
first condition of \cref{lem:forest}. For the second, suppose
$r<a<c<t$, $a\in A_t$, and $c\in C_r$.
The edges $rc$ and $at$ give $rt\in E(G)$ and
$j\leq p(rt)\leq i$ by \cref{lem:ranks}\ref{rank:transport}.
Again the endpoint ranks $j,i$ give $rt\in E(\Gamma)$ directly.
For a strictly intermediate rank, \eqref{eq:barriers} gives
$\beta_r<t$ and $r<\alpha_t$, while the shore definitions give
$\alpha_t\leq a<c\leq\beta_r$. Together these inequalities
are precisely $r<\alpha_t<\beta_r<t$, so $rt\in E(\Gamma)$.
\end{proof}

By \cref{prop:exponential,lem:center}, $\Gamma$ has a proper
coloring with at most $3^{4-1}=27$ colors. If $\Gamma$ has no vertices,
no colors are needed; if it is nonempty and edgeless, one color suffices. Apply
\cref{lem:forest} to each color class. This gives at most $27$ forests meeting every induced path
$a<b<c$ with $p(ab)=i$ and $p(bc)=j$. Every selected edge
has rank $i$ or $j$, by the definition of the shores.

\paragraph{Grouping the forests by incoming rank.}

For each pair $(i,j)$, fix one of the colorings just described and
all the resulting selections. Write $\cF_{i,j}$ for this family
of forests. These choices are made in $G$. Distinct families may share edges,
and a forest may have no edges.

For $1\leq r\leq q$, let $\cR_r$ be the collection of all forests
in $\cF_{r,1},\ldots,\cF_{r,r}$, and let $R_r$ be the spanning
graph whose edge set is the union of the edge sets of these forests. We call
$\cR_r$ the $r$th row of forests. It contains at most
$3+27(r-1)\leq27r$ forests. The preceding construction gives
the following two properties:
\begin{align}
 e\in E(R_r)&\quad\Longrightarrow\quad p(e)\leq r;
 \label{eq:row-rank}\\
 \left.\begin{gathered}
 a<b<c\text{ is an induced path in }G,\\
 p(ab)=r\geq p(bc)
 \end{gathered}\right\}
 &\quad\Longrightarrow\quad
 \{ab,bc\}\cap E(R_r)\ne\varnothing.
 \label{eq:row-cover}
\end{align}
The first property holds because each selected edge has rank
$r$ or rank $j\leq r$. For the second, take
$j=p(bc)$ and the family $\cF_{r,j}$. The center $b$ belongs
to one of its color classes, and the forest for that class
meets the path by \cref{lem:forest}.

Let $F$ be the spanning graph with
$E(F)=\bigcup_{r=1}^q E(R_r)$, and let $H$ be the spanning
graph with $E(H)=E(G)\setminus E(F)$. All the ranks and selections are now fixed. Every subsequent
restriction to a vertex set uses these same forests.

\subsection{The coloring of the original graph}\label{sec:main-proof}

For each original vertex $v$, define
\begin{equation}\label{eq:m}
 m(v)=\max\bigl(\{p(xv):x<v,\ xv\in E(H)\}\cup\{0\}\bigr).
\end{equation}
Thus $0\leq m(v)\leq q$. It is the largest rank of an edge of $H$ entering $v$ from the
left, or zero if there is no such edge. The next lemma relates
$m(v)$ to the selected rows.

\Needspace{5\baselineskip}
\begin{lemma}\label{lem:forcing}
If $uv\in E(G)$, $u<v$, and $p(uv)\leq m(u)$, then
$uv\in E(R_{m(u)})$.
\end{lemma}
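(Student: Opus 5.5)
The plan is to produce a witness edge entering $u$ from the left and then apply the row covering property \eqref{eq:row-cover} to the resulting path on three vertices.

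First, since $p(uv)\geq1$, the hypothesis $p(uv)\leq m(u)$ forces $m(u)\geq1$. By the definition \eqref{eq:m}, the maximum is then attained by a genuine edge. So there is a vertex $x<u$ with $xu\in E(H)$ and $p(xu)=m(u)$. Write $r=m(u)$, so that $1\leq r\leq q$ and the row $R_r$ is defined.

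Next, consider the three vertices $x<u<v$, which carry the edges $xu$ and $uv$. We show that this path is induced, that is, $xv\notin E(G)$. If $xv$ were an edge, then $x<u<v$ would be a triangle, and \cref{lem:ranks}\ref{rank:triangle} would give $p(uv)\geq p(xu)+1=r+1$. This contradicts $p(uv)\leq r$. Hence $x<u<v$ is an induced path in $G$.

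Finally, this path has incoming rank $p(xu)=r$ and outgoing rank $p(uv)\leq r$, so \eqref{eq:row-cover} gives $\{xu,uv\}\cap E(R_r)\neq\varnothing$. The edge $xu$ lies in $H$, and $E(H)=E(G)\setminus E(F)$ with $E(R_r)\subseteq E(F)$, so $xu\notin E(R_r)$. Therefore $uv\in E(R_r)=E(R_{m(u)})$.

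The only step that needs any care is ruling out the chord $xv$, and the rank inequality for triangles handles it directly. I do not expect a genuine obstacle beyond keeping track of the fact that the witnessing edge comes from $H$, which is exactly what excludes it from the row.
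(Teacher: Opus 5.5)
Your proposal is correct and follows the paper's proof step for step. You take an $H$-edge $xu$ attaining $m(u)$, rule out the chord $xv$ via \cref{lem:ranks}\ref{rank:triangle}, apply \eqref{eq:row-cover} to the induced path $x<u<v$, and use $E(H)\cap E(F)=\varnothing$ to conclude that $uv\in E(R_{m(u)})$.
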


\begin{proof}
Since $p(uv)\geq1$, the hypothesis gives $m(u)>0$.
The maximum in \eqref{eq:m} is therefore attained by an
incoming edge $xu\in E(H)$ with $x<u$ and $p(xu)=m(u)$.
If $xv$ were an original edge, the triangle $x<u<v$ and
\cref{lem:ranks}\ref{rank:triangle} would give
$p(uv)\geq p(xu)+1=m(u)+1$, a contradiction.
Hence $x<u<v$ is an induced path in $G$, and its incoming
rank $m(u)$ is at least its outgoing rank $p(uv)$.

By \eqref{eq:row-cover}, one of $xu,uv$ belongs to
$R_{m(u)}$. Since $xu\in E(H)$ and $E(H)\cap E(F)=\varnothing$,
it follows that $uv\in E(R_{m(u)})$.
\end{proof}

For every edge $uv\in E(H)$ with $u<v$, the conclusion of
\cref{lem:forcing} is impossible. Thus
$m(u)<p(uv)\leq m(v)$, where the second inequality follows
from the definition of $m(v)$. Consequently $v\mapsto m(v)+1$ properly colors $H$.

For $s=0,\ldots,q$, let $U_s=\{v\in V(G):m(v)=s\}$.
These sets partition $V(G)$ and are stable in $H$.
The following lemma locates the edges of $G[U_s]$ in the selected rows.

\begin{lemma}\label{lem:suffix}
For every $s\in\{0,\ldots,q\}$,
\begin{equation}\label{eq:suffix}
 E(G[U_s])\subseteq
 \bigcup_{r=\max\{1,s\}}^q E(R_r[U_s]).
\end{equation}
\end{lemma}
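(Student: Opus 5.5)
Fix $s$ and an edge $uv$ of $G[U_s]$ with $u<v$, so $m(u)=m(v)=s$. First, $uv\notin E(H)$. Indeed, every edge of $H$ satisfies $m(u)<p(uv)\leq m(v)$, which would force $m(u)<m(v)$. Hence $uv\in E(F)$, so $uv\in E(R_r)$ for some $r$, and since both ends lie in $U_s$ we get $uv\in E(R_r[U_s])$. It remains to choose $r\geq\max\{1,s\}$. When $s\leq1$ this is automatic, because every row index is at least $1$. So the substance of the proof is the case $s\geq1$.

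In that case I will compare $p(uv)$ with $m(u)=s$ and split into two cases.

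\emph{Case $p(uv)\leq s$.} Then $p(uv)\leq m(u)$, and \cref{lem:forcing} gives $uv\in E(R_{m(u)})=E(R_s)$. This row index is admissible.

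\emph{Case $p(uv)>s$.} I will use \eqref{eq:row-rank}: any row $R_r$ containing $uv$ satisfies $p(uv)\leq r$. Therefore every row that contains $uv$, and by the first paragraph at least one does, has index $r\geq p(uv)>s$. Restricted to $U_s$, this again places $uv$ in $\bigcup_{r\geq s}E(R_r[U_s])$.

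Combining the two cases gives \eqref{eq:suffix}. I expect no real obstacle here. The only points to check are that the case $s=0$ needs no argument, since the lower index is then $1$, and that restricting to $U_s$ preserves membership of $uv$ in $E(R_r)$ because both of its ends lie in $U_s$. Note that the second case never uses $m(v)$. The fact that $m(v)=s$ is needed only to exclude $uv\in E(H)$.
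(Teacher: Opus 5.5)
Your proof is correct and follows the paper's argument essentially step for step. You exclude $uv\in E(H)$ via $m(u)<p(uv)\le m(v)$, then split on $p(uv)\le s$, where \cref{lem:forcing} places $uv$ in $R_s$, versus $p(uv)>s$, where \eqref{eq:row-rank} forces every row containing $uv$ to have index $r>s$; the only cosmetic difference is that you dispose of $s\le 1$ up front, whereas the paper runs the same two cases for all $s$ and notes that the first case cannot occur when $s=0$.
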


\begin{proof}
Take an original edge $uv$ within $U_s$, with $u<v$, and let
$h=p(uv)$. Since $U_s$ is stable in $H$, this edge belongs to
$F$. If $h\leq s=m(u)$, \cref{lem:forcing} puts it in
$R_s$. This case can occur only when $s\geq1$, because
all edge ranks are positive.

If $h>s$, choose any row $R_r$ containing $uv$; such a row
exists because $uv\in E(F)$. By \eqref{eq:row-rank},
$h\leq r$, so this row has $r>s$ and belongs to the union
in \eqref{eq:suffix}. This proves the inclusion. When $s=0$
only this second case occurs, and the union begins at row $1$.
\end{proof}

\begin{proof}[Proof of \cref{thm:main}]
The nonempty edgeless case was noted in
\cref{sec:introduction}. Assume that $G$ has an edge, and
use the fixed construction above. For $s=0,\ldots,q$, let
$d_s=\sum_{r=\max\{1,s\}}^q27r$.
The rows appearing in \eqref{eq:suffix} contain at most
$d_s$ forests in total. Each is still a forest when restricted
to any vertex set.

Take any nonempty $W\subseteq U_s$. A forest on $W$ has at
most $|W|-1$ edges. Restricting \eqref{eq:suffix} to $W$ and summing over the forests
therefore gives $|E(G[W])|\leq d_s(|W|-1)$,
whether or not their edge sets are disjoint. Since $q\geq1$, the sum
defining $d_s$ includes row $q$, and $d_s\geq1$. Hence the
average degree of $G[W]$ is at most
$2d_s(|W|-1)/|W|<2d_s$.
There is a vertex of degree at most $2d_s-1$ in $G[W]$.
This is true for every nonempty induced subgraph of $G[U_s]$.
Repeatedly remove such a vertex until none remain, and then
restore the vertices in the reverse order. When a vertex is
restored, at most $2d_s-1$ of its neighbors have been colored,
so one of $2d_s$ colors is available. This gives a proper
coloring of the whole original graph $G[U_s]$ with at most
$2d_s$ colors. An empty $U_s$ needs no colors.

Use disjoint palettes for different $U_s$. Each $G[U_s]$ is
properly colored, and the disjoint palettes separate the ends
of every edge between different sets $U_s$. Thus this is a
proper coloring of $G$.

\Needspace{10\baselineskip}
It remains to count the colors. For a fixed row index
$r\in\{1,\ldots,q\}$, its summand $27r$ occurs in
$d_s$ exactly when $s\in\{0,1,\ldots,r\}$, so it is counted
$r+1$ times. Therefore
\begin{align*}
 \chi(G)&\leq2\sum_{s=0}^q d_s
 =54\sum_{r=1}^q r(r+1)\\*
 &=18q(q+1)(q+2)
 =18(w^3-w)\leq18w^3.
\end{align*}
This calculation also holds when $q=1$, where there are no
strictly decreasing rank pairs. The independent coloring
in \cref{prop:exponential} gives $3^q$ colors. Choosing the
better of these two proper colorings proves the stated
minimum bound.
\end{proof}

\section*{Declaration of competing interest}

The author declares that he has no known competing financial interests
or personal relationships that could have appeared to influence the
work reported in this paper.

\section*{Acknowledgements}

The research was partially supported by RGC Competitive Earmarked
Research Grant 16308821.

\section*{Data availability}

No data was used for the research described in the article.

\section*{Declaration of AI assistance}

ChatGPT assisted with  the direct elimination argument in the proof of
\cref{lem:bipartite}. It also assisted with language editing. The author takes full responsibility for the accuracy and content of this work.

\end{document}